\documentclass[12pt]{article}

\usepackage{amssymb}
\usepackage{amsthm}
\usepackage{fleqn}
\usepackage{graphicx}
\usepackage{epstopdf}
\usepackage{amsfonts}
\usepackage[numbers]{natbib}
\usepackage[colorlinks,citecolor=blue,urlcolor=blue]{hyperref}
\usepackage{amsmath}
\usepackage{geometry}
\usepackage{pifont}
\geometry{left=2.5cm,right=2.5cm,top=2.5cm,bottom=2.5cm}

\newtheorem{theorem}{Theorem}[section]
\newtheorem{lemma}[theorem]{Lemma}
\newtheorem{proposition}[theorem]{Proposition}

\newtheorem{remark}[theorem]{Remark}

\def\<{\left<}\def\>{\right>}\def\({\left(}\def\){\right)}

\def\<{\left<}\def\>{\right>}\def\({\left(}\def\){\right)}

\newfam\msbmfam\font\tenmsbm=msbm10\textfont
\msbmfam=\tenmsbm\font\sevenmsbm=msbm7
\scriptfont\msbmfam=\sevenmsbm

\def\<{\left<}\def\>{\right>}\def\({\left(}\def\){\right)}

\begin{document}

\title{Stochastic maximum principle for hybrid optimal control problems
under partial observation
\thanks{This work was supported by
the National Key R\&D Program of China (2022YFA1006102),
the National Natural Science Foundation of China (11801072, 11831010),
and the Fundamental Research Funds for the Central Universities (2242021R41175).}}

\author{Siyu Lv\thanks{School of Mathematics, Southeast University,
Nanjing 211189, China (lvsiyu@seu.edu.cn).}
\and
Jie Xiong\thanks{Department of Mathematics and SUSTech International Center for Mathematics,
Southern University of Science and Technology, Shenzhen 518055, China
(xiongj@sustech.edu.cn).}
\and
Wen Xu\thanks{Department of Mathematics,
Southern University of Science and Technology, Shenzhen 518055, China
(xuwenj9@gmail.com).}}

\date{}

\maketitle

\begin{abstract}
This paper is concerned with a partially observed hybrid optimal control problem,
where continuous dynamics and discrete events coexist and in particular,
the continuous dynamics can be observed while the discrete events,
described by a Markov chain, is not directly available.
Such kind of problem is first considered in the literature
and has wide applications in finance, management, engineering, and so on.
There are three major contributions made in this paper:
First, we develop a novel non-linear filtering method to convert
the partially observed problem into a completely observed one.
Our method relies on some delicate stochastic analysis technique related to
hybrid diffusions and is essentially different from the traditional filtering approaches.
Second, we establish a new maximum principle based on
the completely observed problem, whose two-dimensional state process consists of
the continuous dynamics and the optimal filter.
An important advantage of the maximum principle is that it takes a simple form
and is convenient to implement.
Finally, in order to illustrate the theoretical results,
we solve a linear quadratic (LQ) example using the derived maximum principle
to get some observable optimal controls.
\end{abstract}

\textbf{Keywords:} partial observation, non-linear filtering,
Markov chain, maximum principle, linear quadratic problem

\section{Introduction}

The hybrid diffusion is a two-component process $(X_{t},\alpha_{t})$ in which
the first component $X_{t}$ evolves according to a continuous diffusion process
whose drift and diffusion coefficients depend on the regime of $\alpha_{t}$,
where $\alpha_{t}$ is generally assumed to be a Markov chain.
As a result, the hybrid diffusion has the ability to capture more directly
the discrete events that are less frequent (occasional) but nevertheless
more significant to longer-term system behavior; for more details,
see Yin and Zhu \cite{YinZhu2010} and Yin and Zhang \cite{YinZhang2013}
and the references therein. In recent years, there is an increasing interest
in studying control problems of hybrid diffusions.
For example, stochastic maximum principles for hybrid optimal control problems
have been investigated by many researchers, including Donnelly \cite{Donnelly2011},
Zhang et al. \cite{ZES2012}, Li and Zheng \cite{LiZheng2015},
and Nguyen et al. \cite{NNY2020ESAIM,NYN2021AMO}.

Note that all the works aforementioned treated the problems with complete observation.
However, in practice one often encounters scenes where the Markov chain is not observable.
An immediate example is the stock market, which can be roughly divided as bull market
and bear market represented by a two-state Markov chain.
Typically, the price of a stock follows a geometric Brownian motion whose coefficients
switch between the two market trends, but the exact switching times
cannot be directly observed in the real marketplace; see, e.g.,
Rishel and Helmes \cite{RishelHelmes2006}, Dai et al. \cite{DaiZhangZhu2010},
and Xu and Yi \cite{XuYi2020}. Thus, it is very natural and appealing to study
the hybrid optimal control problems under partial observation, where only the
continuous dynamics $X_{t}$ can be observed while the Markov chain $\alpha_{t}$
cannot be observed. To our best knowledge, this kind of control problem is new
and has not been considered in the literature. Clearly, it has considerable impacts
in both theoretical analysis and practical applications, although with intrinsic
mathematical difficulties.

To deal with the problem, the key point is to convert the partially observed problem
into a completely observed one. In this connection, we first develop a novel non-linear
filtering method to estimate the current regime of the Markov chain $\alpha_{t}$
given the information $\mathcal{F}_{t}^{X}$ of the continuous dynamics $X_{t}$.
By combining the non-linear filtering theory for ordinary diffusions (Xiong \cite{Xiong2008}),
the Markov chain theory (Yao et al. \cite{YaoZhangZhou2006},
Yin and Zhu \cite{YinZhu2010}, and Yin and Zhang \cite{YinZhang2013}),
and some related stochastic analysis technique (Karatzas and Shreve \cite{KaratzasShreve1991}),
we obtain the stochastic differential equation (SDE) (\ref{general filtering equation})
satisfied by the filtering process (i.e., the filter equation) and a control problem
with complete observation. It is emphasized that our filtering method is essentially different
from and more general than the traditional approaches such as Wonham \cite{Wonham1965},
Liptser and Shiryayev \cite{LipsterShiryayev1977}, and Bj\"{o}rk \cite{Bjork1980}
in the sense that it gives the filtering equation (\ref{general filtering equation})
satisfied by the conditional expectation of a non-linear function,
not just the conditional probability, of the Markov chain.

For the completely observed control problem, the state pair is given by $(X_{t},\pi_{t})$
(see (\ref{CO state equation})), where $\pi_{t}$ is the so-called optimal filter.
It can be regarded as a classical optimal control problem with no Markov chain.
Thereby, existing results such as Peng \cite{Peng1993} apply to our problem
and then we establish the corresponding maximum principle.
Note that the maximum principle derived is simple and easy to carry out.
The result is new in the control theory. It also distinguishes itself from that
of Wang and Wu \cite{WangWu2009TAC}, Huang et al. \cite{HWX2009},
Wang and Yu \cite{WangYu2012}, and Wang et al. \cite{WWX2013SICON},
where maximum principles were established but under some different
partially observed contexts. As an illustrative example, a linear quadratic (LQ)
problem is solved using the maximum principle and some observable optimal controls
are obtained as a non-linear form in terms of the optimal filter and the adjoint process.

The rest of this paper is organized as follows.
Section \ref{section PO} formulates the control problem under partial observation.
Section \ref{section CO} converts the partially observed problem into a completely observed one.
Section \ref{section MP} establishes the maximum principle.
Section \ref{section LQ} focuses on the LQ case and obtains observable optimal controls.
Finally, Section \ref{section concluding remarks} concludes the paper with some further remarks.

\section{Problem formulation}\label{section PO}

Let $[0,T]$ be a finite time horizon. Let $(\Omega,\mathcal{F},P)$ be a fixed
probability space on which a one-dimensional standard Brownian motion $W_{t}$,
$t\in [0,T]$, and a Markov chain $\alpha_{t}$, $t\in [0,T]$, are defined.
Assume that $W$ and $\alpha$ are independent. For simplicity, we assume that
$\alpha$ is a two-state Markov chain taking values in $\mathcal{M}=\{1,2\}$
with generator
\begin{equation*}
\begin{aligned}
\left[
  \begin{array}{cc}
    -\lambda_{1} & \lambda_{1} \\
    \lambda_{2} & -\lambda_{2} \\
  \end{array}
\right],
\end{aligned}
\end{equation*}
where $\lambda_{1}>0$ and $\lambda_{2}>0$. Let $\{\mathcal{F}_{t}\}_{t\in [0,T]}$
be the natural filtration generated by $W$ and $\alpha$.

Throughout the paper, we denote by $\langle\cdot,\cdot\rangle$ the scalar product
of a Euclidean space. $A^{\top}$ denotes the transpose of a vector or matrix $A$.
$\varphi_{x}$ denotes the derivative of a function $\varphi$ with respect to $x$.
Given a filtration $\{\mathcal{G}_{t}\}_{t\in [0,T]}$, if $\phi$ is a real-valued
$\mathcal{G}_{t}$-adapted square integrable process
(i.e., $E\int_{0}^{T}|\phi_{t}|^{2}dt<\infty$),
we write $\phi\in L_{\mathcal{G}}^{2}(0,T;R)$.

Let the state of the system be described by the following controlled SDE:
\begin{equation}\label{PO system}
\left\{
\begin{aligned}
dX_{t}=&b(t,X_{t},\alpha_{t},v_{t})dt+\sigma(t,X_{t},\alpha_{t},v_{t})dW_{t},\\
X_{0}=&x\in R,\quad \alpha_{0}=i\in\mathcal{M},
\end{aligned}
\right.
\end{equation}
where $b,\sigma:[0,T]\times R\times \mathcal{M}\times U\mapsto R$ are some
deterministic functions and the non-empty convex set $U\subset R$ is called
the control domain. Let $\{\mathcal{F}_{t}^{X}\}_{t\in [0,T]}$ be the natural
filtration generated by $X$, representing the information available;
note that the Markov chain $\alpha$ cannot be directly observed for the controller.
A process $v$ is called an admissible control if it satisfies
$v\in L_{\mathcal{F}^{X}}^{2}(0,T;U)$; we also denote the set of all admissible controls
by $\mathcal{U}_{ad}=L_{\mathcal{F}^{X}}^{2}(0,T;U)$.
Now we introduce the following assumption:

\textbf{(A1)} The functions $b$ and $\sigma$ are continuously differentiable
with respect to $(x,v)$, and the partial derivatives
$b_{x}$, $b_{v}$, $\sigma_{x}$, $\sigma_{v}$ are uniformly bounded.

Under Assumption (A1), for any $v\in\mathcal{U}_{ad}$, SDE (\ref{PO system})
admits a unique solution $X$ (see, e.g., Yin and Zhu \cite{YinZhu2010}).
The associated cost functional is defined as
\begin{equation}\label{PO cost functional}
\begin{aligned}
J(v)=E\bigg[\int_{0}^{T}f(t,X_{t},\alpha_{t},v_{t})dt+g(X_{T},\alpha_{T})\bigg],
\end{aligned}
\end{equation}
where $f:[0,T]\times R\times\mathcal{M}\times U\mapsto R$
and $g:R\times\mathcal{M}\mapsto R$ are some deterministic functions
satisfying the following assumption:

\textbf{(A2)} The function $f$ is continuously differentiable with respect to $(x,v)$
and the function $g$ is continuously differentiable with respect to $x$.
Moreover, there exists a constant $K>0$ such that
\begin{equation*}
\begin{aligned}
(1+|x|^{2}+|v|^{2})^{-1}|f(t,x,i,v)|
+(1+|x|+|v|)^{-1}(|f_{x}(t,x,i,v)|+|f_{v}(t,x,i,v)|)&\leq K,\\
(1+|x|^{2})^{-1}|g(x,i)|+(1+|x|)^{-1}|g_{x}(x,i)|&\leq K.
\end{aligned}
\end{equation*}
The partially observed hybrid optimal control problem
is to find a $u\in\mathcal{U}_{ad}$ such that
\begin{equation*}
\begin{aligned}
J(u)=\min_{v\in\mathcal{U}_{ad}}J(v)
\end{aligned}
\end{equation*}
subject to (\ref{PO system}).
If such a $u$ exists, then it is called an optimal control, and the corresponding
solution $X$ to (\ref{PO system}) is called the optimal trajectory.
Our main goal is to establish a maximum principle, namely, a necessary condition
for the optimal control $u$.
\begin{remark}
In this paper, we have assumed that the state process $X$, control process $v$,
and Brownian motion $W$ to be one-dimensional only for convenience of presentation.
There is no essential difficulty to extend the results to the multi-dimensional case.
\end{remark}

\section{Problem with complete observation}\label{section CO}

Note that, in the problem, only the state process $X_{t}$ is observable at time $t$.
The Markov chain $\alpha_{t}$ can not be directly available. Hence, it is necessary
to convert the problem into a completely observable one. One way to accomplish this
purpose is to use the non-linear filtering theory. To this end, we need to assume:

\textbf{(A3)} The function $\sigma$ is independent of the Markov chain $\alpha$, i.e.,
\begin{equation*}
\begin{aligned}
\sigma(t,x,i,v)=\sigma(t,x,v).
\end{aligned}
\end{equation*}
Otherwise, we will have a singular filtering problem
(see Crisan et al. \cite{CKX2009}).

Denote
\begin{equation*}
\begin{aligned}
h(t,x,i,v)=\sigma^{-1}(t,x,v)b(t,x,i,v).
\end{aligned}
\end{equation*}
We also make the following assumption:

\textbf{(A4)} The function $\sigma^{-1}$ is bounded.

Define the filtering process associated with a function
$\varphi:\mathcal{M}\mapsto R$
\begin{equation*}
\begin{aligned}
\mu_{t}(\varphi)=E[\varphi(\alpha_{t})|\mathcal{F}_{t}^{X}].
\end{aligned}
\end{equation*}
Define the observation process
\begin{equation*}
\begin{aligned}
dY_{t}
=&\sigma^{-1}(t,X_{t},v_{t})dX_{t}\\
=&h(t,X_{t},\alpha_{t},v_{t})dt+dW_{t}.
\end{aligned}
\end{equation*}
It follows that
\begin{equation}\label{FY in FX}
\begin{aligned}
\mathcal{F}_{t}^{Y}\subset\mathcal{F}_{t}^{X},\quad t\in [0,T],
\end{aligned}
\end{equation}
where $\{\mathcal{F}_{t}^{Y}\}_{t\in [0,T]}$ denotes the natural filtration
generated by $Y$.

Define the innovation process
\begin{equation*}
\begin{aligned}
d\nu_{t}
=&dY_{t}-\mu_{t}(h)dt\\
=&h(t,X_{t},\alpha_{t},v_{t})dt+dW_{t}-\mu_{t}(h)dt,
\end{aligned}
\end{equation*}
where with a bit abuse of notation, we denote
$\mu_{t}(h)=E[h(t,X_{t},\alpha_{t},v_{t})|\mathcal{F}_{t}^{X}]$.
\begin{lemma}\label{lemma innovation process}
The process $\nu_{t}$ is an $\mathcal{F}_{t}^{X}$-Brownian motion under $P$.
\end{lemma}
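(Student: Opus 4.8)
The plan is to verify the hypotheses of Lévy's characterization theorem: I will show that $\nu$ is a continuous $\mathcal{F}_{t}^{X}$-martingale with $\nu_{0}=0$ and quadratic variation $\langle\nu\rangle_{t}=t$, from which it follows that $\nu$ is an $\mathcal{F}_{t}^{X}$-Brownian motion under $P$. Integrating the defining relation for $d\nu_{t}$ gives the representation
\[
\nu_{t}=\int_{0}^{t}\big(h(s,X_{s},\alpha_{s},v_{s})-\mu_{s}(h)\big)\,ds+W_{t},
\]
so that $\nu_{0}=0$ and $\nu$ is continuous. Adaptedness is immediate because $Y$ and $s\mapsto\mu_{s}(h)$ are $\mathcal{F}_{t}^{X}$-adapted (for $Y$ this is clear from $dY_{t}=\sigma^{-1}dX_{t}$, and for $\mu_{s}(h)=E[h_{s}\,|\,\mathcal{F}_{s}^{X}]$ it is by definition). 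For the quadratic variation, the first term on the right-hand side is absolutely continuous, hence of finite variation and does not contribute, so $\langle\nu\rangle_{t}=\langle W\rangle_{t}=t$.

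The core of the argument is the $\mathcal{F}_{t}^{X}$-martingale property. Fix $0\le s<t$ and a bounded $\mathcal{F}_{s}^{X}$-measurable random variable $\xi$; it suffices to show $E[\xi(\nu_{t}-\nu_{s})]=0$. Splitting $\nu_{t}-\nu_{s}$ into its drift and noise parts, I treat
\[
E\Big[\xi\int_{s}^{t}\big(h(r,X_{r},\alpha_{r},v_{r})-\mu_{r}(h)\big)\,dr\Big]\quad\text{and}\quad E[\xi(W_{t}-W_{s})]
\]
separately. For the first, Fubini's theorem—justified by the $L^{2}$-integrability of $h$, which follows from Assumptions (A1) and (A4) together with the standard moment estimate for $X$—reduces it to $\int_{s}^{t}\big(E[\xi h_{r}]-E[\xi\mu_{r}(h)]\big)\,dr$. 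Since $\xi$ is $\mathcal{F}_{s}^{X}\subset\mathcal{F}_{r}^{X}$-measurable and $\mu_{r}(h)=E[h_{r}\,|\,\mathcal{F}_{r}^{X}]$, the tower property yields $E[\xi\mu_{r}(h)]=E[\xi h_{r}]$ for every $r\in[s,t]$, so this term vanishes identically.

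The second term is where I expect the only real subtlety, since it requires controlling $W$ against the observation filtration $\mathcal{F}^{X}$ rather than the filtration it naturally drives. Here I will use that $W$ is a Brownian motion with respect to the large filtration $\mathcal{F}_{t}$ generated by $W$ and $\alpha$: because $W$ and $\alpha$ are independent, the increment $W_{t}-W_{s}$ is independent of $\mathcal{F}_{s}$, hence $E[W_{t}-W_{s}\,|\,\mathcal{F}_{s}]=0$. As $X$ is a functional of $(W,\alpha,v)$ with $v$ itself $\mathcal{F}^{X}$-adapted, one has $\mathcal{F}_{s}^{X}\subset\mathcal{F}_{s}$, so $\xi$ is $\mathcal{F}_{s}$-measurable and
\[
E[\xi(W_{t}-W_{s})]=E\big[\xi\,E[W_{t}-W_{s}\,|\,\mathcal{F}_{s}]\big]=0.
\]

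Combining the two pieces gives $E[\xi(\nu_{t}-\nu_{s})]=0$, which is the $\mathcal{F}_{t}^{X}$-martingale property, and Lévy's theorem then finishes the proof. The delicate point to get right is precisely this passage between the observation filtration $\mathcal{F}^{X}$, in which $\nu$ must be shown to be a martingale, and the driving filtration $\mathcal{F}$, in which $W$ is known to be a martingale; it hinges on the inclusion $\mathcal{F}_{s}^{X}\subset\mathcal{F}_{s}$ and on the independence of $W$ and $\alpha$. Everything else is the routine integrability/Fubini bookkeeping indicated above.
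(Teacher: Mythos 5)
Your proof is correct and follows essentially the same route as the paper: decompose $\nu$ into its drift part and the Brownian increment, verify the $\mathcal{F}_{t}^{X}$-martingale property (drift part via the tower property through $\mathcal{F}_{r}^{X}$, Brownian part via the inclusion $\mathcal{F}_{s}^{X}\subset\mathcal{F}_{s}$ and independence of $W$ and $\alpha$), note $\langle\nu\rangle_{t}=t$, and invoke L\'{e}vy's characterization. The only difference is presentational---you test increments against bounded $\mathcal{F}_{s}^{X}$-measurable $\xi$ and spell out the Fubini/integrability and filtration-inclusion steps that the paper's proof uses implicitly---so your write-up is, if anything, a more complete version of the same argument.
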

\begin{proof}
For simplicity, we denote $h_{t}=h(t,X_{t},\alpha_{t},v_{t})$.
Note that for $t>s$,
\begin{equation*}
\begin{aligned}
E[\nu_{t}-\nu_{s}|\mathcal{F}_{s}^{X}]
=&E\bigg[W_{t}-W_{s}+\int_{s}^{t}(h_{r}-\mu_{r}(h))dr\bigg|\mathcal{F}_{s}^{X}\bigg]\\
=&0+\int_{s}^{t}E[h_{r}-\mu_{r}(h)|\mathcal{F}_{s}^{X}]dr=0.
\end{aligned}
\end{equation*}
In addition, it is clear that the quadratic variation process
$\langle\nu\rangle_{t}=\langle W\rangle_{t}=t$.
From L\'{e}vy's martingale characterization of Brownian motion
(see Karatzas and Shreve \cite[Theorem 3.16]{KaratzasShreve1991}
or Xiong \cite[Theorem 3.3.13]{Xiong2008}), we see that
$\nu_{t}$ is an $\mathcal{F}_{t}^{X}$-Brownian motion under $P$.
\end{proof}
Define a process
\begin{equation*}
\begin{aligned}
M_{t}^{-1}=\text{exp}\bigg(-\int_{0}^{t}h_{s}dW_{s}
-\frac{1}{2}\int_{0}^{t}|h_{s}|^{2}ds\bigg).
\end{aligned}
\end{equation*}
Let $\widehat{P}$ be the probability measure absolutely continuous
with respect to $P$ with Radon-Nikodym derivative $M_{t}^{-1}$, i.e.,
\begin{equation*}
\begin{aligned}
\frac{d\widehat{P}}{dP}\bigg|_{\mathcal{F}_{t}}=M_{t}^{-1}.
\end{aligned}
\end{equation*}
From Yao et al. \cite[Lemma 1]{YaoZhangZhou2006}, under the new probability measure
$\widehat{P}$, $Y_{t}$ is a standard $\widehat{P}$-Brownian motion,
$\alpha_{t}$ is still a Markov chain with the same generator under $P$,
and $Y_{t}$ is independent of $\alpha_{t}$. Moreover,
\begin{equation*}
\begin{aligned}
dM_{t}=h_{t}M_{t}dY_{t}.
\end{aligned}
\end{equation*}
It implies that $M_{t}$ is also independent of $\alpha_{t}$ under $\widehat{P}$.

Denote
\begin{equation}\label{indicator function}
\begin{aligned}
\pi_{t}
=\mu_{t}(1_{\{\alpha_{t}=1\}})
=E[1_{\{\alpha_{t}=1\}}|\mathcal{F}_{t}^{X}]
=P(\alpha_{t}=1|\mathcal{F}_{t}^{X}),
\end{aligned}
\end{equation}
i.e., $\pi_{t}$ is the so-called optimal filter.
To derive the SDE satisfied by $\mu_{t}$ (and specially, $\pi_{t}$),
we need the following additional assumption:

\textbf{(A5)} $\mathcal{F}_{t}^{X}=\mathcal{F}_{t}^{Y}$, $t\in [0,T]$.

The following remark gives some sufficient conditions under which
Assumption (A5) holds.
\begin{remark}
The Assumption (A5) is satisfied under any one of the following conditions:

{\rm(a)} The function $\sigma$ does not depend on the control $v$,
namely, $\sigma(t,x,v)=\sigma(t,x)$.

{\rm(b)} The admissible control is restricted to those $v_{t}$
which is $\mathcal{F}_{t}^{Y}$-adapted.

{\rm(c)} The admissible control is restricted to those $v_{t}$ which is
in a closed-loop form $v_{t}=v(t,X_{t})$ satisfying the Lipschitz condition:
\begin{equation*}
\begin{aligned}
|v(t,x)-v(t,y)|\leq L|x-y|,
\end{aligned}
\end{equation*}
for some constant $L>0$.
\end{remark}
\begin{proof}
(a) Note that in this case, the SDE
\begin{equation*}
\begin{aligned}
dX_{t}=\sigma(t,X_{t})dY_{t}
\end{aligned}
\end{equation*}
admits a unique strong solution $X_{t}=F(t,Y)$ for a suitable functional $F$
depending on the path of $Y$ up to $t$.
So we have $\mathcal{F}_{t}^{X}\subset\mathcal{F}_{t}^{Y}$, which combines with
(\ref{FY in FX}) implying that $\mathcal{F}_{t}^{X}=\mathcal{F}_{t}^{Y}$.

(b) We can modify the proof of (a) to get $X_{t}=F(t,v,Y)$ for a suitable functional $F$
depending on the paths of $(v,Y)$ up to $t$. Since $v_{t}$ is $\mathcal{F}_{t}^{Y}$-adapted,
we also have $\mathcal{F}_{t}^{X}\subset\mathcal{F}_{t}^{Y}$.

(c) In this case, we have the following SDE
\begin{equation*}
\begin{aligned}
dX_{t}=\sigma(t,X_{t},v(t,X_{t}))dY_{t}.
\end{aligned}
\end{equation*}
Suppose that $X$ and $\widetilde{X}$ are two solutions to the above equation,
then we have
\begin{equation*}
\begin{aligned}
\widehat{E}|X_{t}-\widetilde{X}_{t}|^2
=\widehat{E}\int_{0}^{t}|\sigma(s,X_{s},v(s,X_{s}))
-\sigma(s,\widetilde{X}_{s},v(s,\widetilde{X}_{s}))|^2ds
\leq K\int_{0}^{t}\widehat{E}|X_{s}-\widetilde{X}_{s}|^2ds,
\end{aligned}
\end{equation*}
where $\widehat{E}$ denotes the expectation under $\widehat{P}$.
It follows from Gronwall's inequality that $X_{t}\equiv\widetilde{X}_{t}$.
This yields the existence of a unique strong solution $X$,
which leads to $\mathcal{F}_{t}^{X}\subset\mathcal{F}_{t}^{Y}$.
\end{proof}
\begin{theorem}\label{theorem filter}
Let Assumptions (A1)-(A5) hold. Then, the two-dimensional process $(X_{t},\pi_{t})$
defined by (\ref{PO system}) and (\ref{indicator function}), respectively,
is the unique solution to the following SDE:
\begin{equation}\label{CO state equation}
\left\{
\begin{aligned}
dX_{t}=&[b(t,X_{t},1,v_{t})\pi_{t}+b(t,X_{t},2,v_{t})(1-\pi_{t})]dt+\sigma(t,X_{t},v_{t})d\nu_{t},\\
d\pi_{t}=&[-\lambda_{1}\pi_{t}+\lambda_{2}(1-\pi_{t})]dt
+[h(t,X_{t},1,v_{t})-h(t,X_{t},2,v_{t})]\pi_{t}(1-\pi_{t})d\nu_{t},\\
X_{0}=&x\in R,\quad \pi_{0}=\pi\in \{0,1\},
\end{aligned}
\right.
\end{equation}
and the cost functional (\ref{PO cost functional}) becomes
\begin{equation}\label{CO cost functional}
\begin{aligned}
J(v)
=&E\bigg[\int_{0}^{T}[f(t,X_{t},1,v_{t})\pi_{t}+f(t,X_{t},2,v_{t})(1-\pi_{t})]dt\\
&+g(X_{T},1)\pi_{T}+g(X_{T},2)(1-\pi_{T})\bigg].
\end{aligned}
\end{equation}
\end{theorem}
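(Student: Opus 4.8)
The plan is to split the statement into two independent parts—the dynamics of the pair $(X_t,\pi_t)$ and the reformulation of the cost functional—and to handle the cost functional first since it is immediate. By the tower property I would condition the running cost on $\mathcal{F}_t^X$ and the terminal cost on $\mathcal{F}_T^X$. Since $X_t$ and $v_t$ are $\mathcal{F}_t^X$-measurable while $P(\alpha_t=1\mid\mathcal{F}_t^X)=\pi_t$ by (\ref{indicator function}), one obtains $E[f(t,X_t,\alpha_t,v_t)\mid\mathcal{F}_t^X]=f(t,X_t,1,v_t)\pi_t+f(t,X_t,2,v_t)(1-\pi_t)$ and the analogous identity for $g$; substituting these into (\ref{PO cost functional}) yields (\ref{CO cost functional}).

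For the $X$-equation I would rewrite (\ref{PO system}) in terms of the innovation. From $d\nu_t=h_t\,dt+dW_t-\mu_t(h)\,dt$ we have $dW_t=d\nu_t-(h_t-\mu_t(h))\,dt$, and since $\sigma h=\sigma\sigma^{-1}b=b$ the genuine drift $b(t,X_t,\alpha_t,v_t)\,dt$ cancels exactly against $-\sigma h_t\,dt$, leaving $\sigma\mu_t(h)\,dt+\sigma\,d\nu_t$. Because $\sigma^{-1}$ is $\mathcal{F}_t^X$-measurable it factors out of $\mu_t(h)$, and the two-state identity gives $\sigma\mu_t(h)=b(t,X_t,1,v_t)\pi_t+b(t,X_t,2,v_t)(1-\pi_t)$, which is precisely the first line of (\ref{CO state equation}).

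The substantive step is the filter equation for $\pi_t$, which I would obtain by first deriving the general Kushner--Stratonovich equation for $\mu_t(\varphi)=E[\varphi(\alpha_t)\mid\mathcal{F}_t^X]$ and then specializing. Using the reference measure $\widehat{P}$ already introduced, I would define the unnormalized filter $\rho_t(\varphi)=\widehat{E}[\varphi(\alpha_t)M_t\mid\mathcal{F}_t^Y]$ and invoke (A5) to identify $\mathcal{F}_t^Y=\mathcal{F}_t^X$. Writing the Dynkin decomposition $\varphi(\alpha_t)=\varphi(\alpha_0)+\int_0^t(Q\varphi)(\alpha_s)\,ds+N_t^\varphi$, with $Q$ the generator and $N^\varphi$ a $\widehat{P}$-martingale, and combining it with $dM_t=h_tM_t\,dY_t$, the independence of $\alpha$ and $Y$ under $\widehat{P}$ kills the cross-variation $[\varphi(\alpha),M]$; taking $\widehat{E}[\cdot\mid\mathcal{F}_t^Y]$ then yields the Zakai equation $d\rho_t(\varphi)=\rho_t(Q\varphi)\,dt+\rho_t(h\varphi)\,dY_t$. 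Dividing by $\rho_t(1)$, which satisfies $d\rho_t(1)=\rho_t(h)\,dY_t$ because $Q\mathbf 1=0$, and applying It\^o's formula produces $d\mu_t(\varphi)=\mu_t(Q\varphi)\,dt+[\mu_t(h\varphi)-\mu_t(h)\mu_t(\varphi)]\,d\nu_t$. Specializing to $\varphi=1_{\{\cdot=1\}}$, a direct computation gives $\mu_t(Q\varphi)=-\lambda_1\pi_t+\lambda_2(1-\pi_t)$ and $\mu_t(h\varphi)-\mu_t(h)\pi_t=[h(t,X_t,1,v_t)-h(t,X_t,2,v_t)]\pi_t(1-\pi_t)$, recovering the second line of (\ref{CO state equation}).

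Finally, for uniqueness I would use that $\pi_t$, being a conditional probability, satisfies $0\le\pi_t\le1$, so $\pi\mapsto\pi(1-\pi)$ is globally Lipschitz on the invariant strip $R\times[0,1]$, while (A1) and (A4) make $b$, $\sigma$ and $h=\sigma^{-1}b$ Lipschitz in $x$; standard strong well-posedness for SDEs then gives that $(X_t,\pi_t)$ is the unique solution of (\ref{CO state equation}). The main obstacle I anticipate is the rigorous passage to the Zakai equation—specifically justifying that the conditional expectation commutes with the stochastic integral against $Y$ and that the contribution of the chain-martingale $N^\varphi$ vanishes under $\widehat{E}[\cdot\mid\mathcal{F}_t^Y]$—which is exactly where the independence structure of $\alpha$ and $Y$ under $\widehat{P}$ must be exploited with care.
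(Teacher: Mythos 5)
Your proposal is correct and follows essentially the same route as the paper's proof: the reference measure $\widehat{P}$, the unnormalized filter (your $\rho_t$, the paper's $V_t$), the Zakai equation obtained by killing the cross-variation and the chain-martingale term under $\widehat{E}[\cdot\mid\mathcal{F}_t^Y]$, normalization by It\^{o}'s formula, and then specialization to $\varphi=1_{\{\cdot=1\}}$, with the $X$-equation and cost functional handled exactly as in the paper. Your closing uniqueness sketch via Lipschitz estimates on the strip $R\times[0,1]$ is a small addition the paper's proof does not spell out.
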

\begin{proof}
By Bayes's formula, we have
\begin{equation*}
\begin{aligned}
E[\varphi(\alpha_{t})|\mathcal{F}_{t}^{X}]
=\frac{\widehat{E}[\varphi(\alpha_{t})M_{t}|\mathcal{F}_{t}^{X}]}{\widehat{E}[M_{t}|\mathcal{F}_{t}^{X}]}.
\end{aligned}
\end{equation*}
The above equation can be rewritten as
\begin{equation}\label{K-S formula}
\begin{aligned}
\mu_{t}(\varphi)=\frac{V_{t}(\varphi)}{V_{t}(1)},
\end{aligned}
\end{equation}
where we denote
\begin{equation*}
\begin{aligned}
V_{t}(\varphi)=\widehat{E}[\varphi(\alpha_{t})M_{t}|\mathcal{F}_{t}^{X}].
\end{aligned}
\end{equation*}
In view of Yin and Zhang \cite[Theorem 2.5]{YinZhang2013},
the process
\begin{equation*}
\begin{aligned}
N_{t}(\varphi)=\varphi(\alpha_{t})
-\int_{0}^{t}Q\varphi(\cdot)(\alpha_{s})ds
\end{aligned}
\end{equation*}
is a martingale, where
\begin{equation*}
\begin{aligned}
Q\varphi(\cdot)(i)=\sum_{j\neq i}q_{ij}[\varphi(j)-\varphi(i)],\quad i\in\mathcal{M},
\end{aligned}
\end{equation*}
is the infinitesimal operator associated with the Markov chain $\alpha$.

Applying integration by parts formula
(see Karatzas and Shreve \cite[Problem 3.12]{KaratzasShreve1991}), we obtain
\begin{equation*}
\begin{aligned}
d(\varphi(\alpha_{t})M_{t})
=&\varphi(\alpha_{t})dM_{t}+M_{t}d\varphi(\alpha_{t})+d\langle M,\varphi(\alpha)\rangle_{t}\\
=&\varphi(\alpha_{t})h_{t}M_{t}dY_{t}
+M_{t}Q\varphi(\cdot)(\alpha_{t})dt+M_{t}dN_{t}(\varphi),
\end{aligned}
\end{equation*}
where, from the independence of $M$ and $\alpha$,
the cross-variation process $\langle M,\varphi(\alpha)\rangle_{t}\equiv0$.

Integrating from 0 to $t$, we have
\begin{equation*}
\begin{aligned}
\varphi(\alpha_{t})M_{t}=\varphi(\alpha_{0})
+\int_{0}^{t}\varphi(\alpha_{s})h_{s}M_{s}dY_{s}
+\int_{0}^{t}M_{s}Q\varphi(\cdot)(\alpha_{s})ds
+\int_{0}^{t}M_{s}dN_{s}(\varphi).
\end{aligned}
\end{equation*}
Taking conditional expectation $\widehat{E}[\cdot|\mathcal{F}_{t}^{X}]$
on both sides, noting that $\mathcal{F}_{t}^{X}=\mathcal{F}_{t}^{Y}$ and
$Y$ and $\alpha$ are independent under $\widehat{P}$,
and using Xiong \cite[Lemma 5.4]{Xiong2008}, we get
\begin{equation*}
\begin{aligned}
V_{t}(\varphi)
=&V_{0}(\varphi)
+\widehat{E}\bigg[\int_{0}^{t}\varphi(\alpha_{s})h_{s}M_{s}dY_{s}\bigg|\mathcal{F}_{t}^{X}\bigg]\\
&+\widehat{E}\bigg[\int_{0}^{t}M_{s}Q\varphi(\cdot)(\alpha_{s})ds\bigg|\mathcal{F}_{t}^{X}\bigg]
+\widehat{E}\bigg[\int_{0}^{t}M_{s}dN_{s}(\varphi)\bigg|\mathcal{F}_{t}^{X}\bigg]\\
=&V_{0}(\varphi)
+\int_{0}^{t}\widehat{E}[\varphi(\alpha_{s})h_{s}M_{s}|\mathcal{F}_{s}^{X}]dY_{s}
+\int_{0}^{t}\widehat{E}[M_{s}Q\varphi(\cdot)(\alpha_{s})|\mathcal{F}_{s}^{X}]ds\\
=&V_{0}(\varphi)
+\int_{0}^{t}V_{s}(h\varphi)dY_{s}
+\int_{0}^{t}V_{s}(Q\varphi)ds.
\end{aligned}
\end{equation*}
Hence, $V_{t}$ satisfies the following equation:
\begin{equation*}
\begin{aligned}
dV_{t}(\varphi)=V_{t}(Q\varphi)dt+V_{t}(h\varphi)dY_{t}.
\end{aligned}
\end{equation*}
In particular,
\begin{equation*}
\begin{aligned}
dV_{t}(1)=V_{t}(h)dY_{t},
\end{aligned}
\end{equation*}
and
\begin{equation*}
\begin{aligned}
d\frac{1}{V_{t}(1)}
=-\frac{V_{t}(h)}{V_{t}^{2}(1)}dY_{t}+\frac{V_{t}^{2}(h)}{V_{t}^{3}(1)}dt.
\end{aligned}
\end{equation*}
Applying It\^{o}'s formula to (\ref{K-S formula}), we have
\begin{equation*}
\begin{aligned}
d\mu_{t}(\varphi)
=&d\frac{V_{t}(\varphi)}{V_{t}(1)}\\
=&\frac{1}{V_{t}(1)}dV_{t}(\varphi)+V_{t}(\varphi)d\frac{1}{V_{t}(1)}+dV_{t}(\varphi)d\frac{1}{V_{t}(1)}\\
=&\frac{1}{V_{t}(1)}[V_{t}(Q\varphi)dt+V_{t}(h\varphi)dY_{t}]
+V_{t}(\varphi)\bigg[-\frac{V_{t}(h)}{V_{t}^{2}(1)}dY_{t}+\frac{V_{t}^{2}(h)}{V_{t}^{3}(1)}dt\bigg]
-\frac{V_{t}(h\varphi)V_{t}(h)}{V_{t}^{2}(1)}dt\\
=&\bigg[\frac{V_{t}(Q\varphi)}{V_{t}(1)}
+\frac{V_{t}(\varphi)V_{t}^{2}(h)}{V_{t}^{3}(1)}
-\frac{V_{t}(h\varphi)V_{t}(h)}{V_{t}^{2}(1)}\bigg]dt
+\bigg[\frac{V_{t}(h\varphi)}{V_{t}(1)}
-\frac{V_{t}(\varphi)V_{t}(h)}{V_{t}^{2}(1)}\bigg]dY_{t}\\
=&[\mu_{t}(Q\varphi)+\mu_{t}(\varphi)\mu_{t}^{2}(h)-\mu_{t}(h\varphi)\mu_{t}(h)]dt
+[\mu_{t}(h\varphi)-\mu_{t}(\varphi)\mu_{t}(h)]dY_{t}.
\end{aligned}
\end{equation*}
Replacing $dY_{t}$ by $d\nu_{t}+\mu_{t}(h)dt$ in the above equation, it follows that
the filtering process $\mu_{t}(\varphi)$ satisfies the following equation
(i.e., filtering equation)
\begin{equation}\label{general filtering equation}
\begin{aligned}
d\mu_{t}(\varphi)=\mu_{t}(Q\varphi)dt
+[\mu_{t}(h\varphi)-\mu_{t}(\varphi)\mu_{t}(h)]d\nu_{t}.
\end{aligned}
\end{equation}
Recall that $\pi_{t}$ is defined by (\ref{indicator function})
in which $\varphi(\alpha_{t})=1_{\{\alpha_{t}=1\}}$.
In this case,
\begin{equation*}
\begin{aligned}
\mu_{t}(Q\varphi)
=&E[Q\varphi(\cdot)(\alpha_{t})|\mathcal{F}_{t}^{X}]\\
=&Q\varphi(\cdot)(1)\pi_{t}+Q\varphi(\cdot)(2)(1-\pi_{t})\\
=&\lambda_{1}[\varphi(2)-\varphi(1)]\pi_{t}
+\lambda_{2}[\varphi(1)-\varphi(2)](1-\pi_{t})\\
=&\lambda_{1}[0-1]\pi_{t}
+\lambda_{2}[1-0](1-\pi_{t})\\
=&-\lambda_{1}\pi_{t}+\lambda_{2}(1-\pi_{t}).
\end{aligned}
\end{equation*}
Similarly,
\begin{equation*}
\begin{aligned}
\mu_{t}(h)
=&E[h(t,X_{t},\alpha_{t},v_{t})|\mathcal{F}_{t}^{X}]\\
=&h(t,X_{t},1,v_{t})\pi_{t}+h(t,X_{t},2,v_{t})(1-\pi_{t}),
\end{aligned}
\end{equation*}
and
\begin{equation*}
\begin{aligned}
\mu_{t}(h\varphi)
=&E[h(t,X_{t},\alpha_{t},v_{t})1_{\{\alpha_{t}=1\}}|\mathcal{F}_{t}^{X}]\\
=&h(t,X_{t},1,v_{t})\pi_{t}.
\end{aligned}
\end{equation*}
Then,
\begin{equation*}
\begin{aligned}
&\mu_{t}(h\varphi)-\mu_{t}(\varphi)\mu_{t}(h)\\
=&h(t,X_{t},1,v_{t})\pi_{t}-[h(t,X_{t},1,v_{t})\pi_{t}+h(t,X_{t},2,v_{t})(1-\pi_{t})]\pi_{t}\\
=&[h(t,X_{t},1,v_{t})-h(t,X_{t},2,v_{t})]\pi_{t}(1-\pi_{t}).
\end{aligned}
\end{equation*}
Thus, the general filtering equation (\ref{general filtering equation})
reduces to the second part of (\ref{CO state equation}).

On the other hand,
\begin{equation*}
\begin{aligned}
dX_{t}=&\sigma(t,X_{t},v_{t})dY_{t}\\
=&\sigma(t,X_{t},v_{t})[\mu_{t}(h)dt+d\nu_{t}]\\
=&\sigma(t,X_{t},v_{t})E[h(t,X_{t},\alpha_{t},v_{t})|\mathcal{F}_{t}^{X}]dt+\sigma(t,X_{t},v_{t})d\nu_{t}\\
=&E[b(t,X_{t},\alpha_{t},v_{t})|\mathcal{F}_{t}^{X}]dt+\sigma(t,X_{t},v_{t})d\nu_{t}\\
=&[b(t,X_{t},1,v_{t})\pi_{t}+b(t,X_{t},2,v_{t})(1-\pi_{t})]dt+\sigma(t,X_{t},v_{t})d\nu_{t},
\end{aligned}
\end{equation*}
which leads to the first part of (\ref{CO state equation}).

Finally, the cost functional (\ref{PO cost functional}) becomes
\begin{equation*}
\begin{aligned}
J(v)
=&E\bigg[\int_{0}^{T}f(t,X_{t},\alpha_{t},v_{t})dt+g(X_{T},\alpha_{T})\bigg]\\
=&E\bigg[\int_{0}^{T}E[f(t,X_{t},\alpha_{t},v_{t})|\mathcal{F}_{t}^{X}]dt
+E[g(X_{T},\alpha_{T})|\mathcal{F}_{T}^{X}]\bigg]\\
=&E\bigg[\int_{0}^{T}[f(t,X_{t},1,v_{t})\pi_{t}+f(t,X_{t},2,v_{t})(1-\pi_{t})]dt\\
&+g(X_{T},1)\pi_{T}+g(X_{T},2)(1-\pi_{T})\bigg].
\end{aligned}
\end{equation*}
The proof is now completed.
\end{proof}
Now, the partially observed control problem is converted to a completely observed one.
In fact, the two-dimensional state equation (\ref{CO state equation}) is driven by $\nu_{t}$,
whose natural filtration is denoted as $\{\mathcal{F}_{t}^{\nu}\}_{t\in [0,T]}$,
and the available information $\mathcal{F}_{t}^{X}$ (or, $\mathcal{F}_{t}^{Y}$)
contains $\mathcal{F}_{t}^{\nu}$ (see Lemma \ref{lemma innovation process}).
Then, our aim is to find a $u\in\mathcal{U}_{ad}$ to minimize (\ref{CO cost functional})
subject to (\ref{CO state equation}).
\begin{remark}
Note that the non-linear filtering method developed above is novel and
essentially different from the traditional approaches such as Wonham \cite{Wonham1965},
Liptser and Shiryayev \cite{LipsterShiryayev1977}, and Bj\"{o}rk \cite{Bjork1980}.
It provides a general filtering equation (\ref{general filtering equation})
for the conditional expectation of a non-linear function of the Markov chain,
which appears for the first time in the literature. In particular, when $\varphi$ takes
an indicator function as (\ref{indicator function}), the general filtering equation
(\ref{general filtering equation}) reduces to the second part of (\ref{CO state equation}),
which coincides with that in \cite{Wonham1965,LipsterShiryayev1977,Bjork1980}.
\end{remark}

\section{Maximum principle}\label{section MP}

For convenience, we rewrite the state equation (\ref{CO state equation})
and cost functional (\ref{CO cost functional}) of the completely observed problem
in a more compact form.

Denote
\begin{equation*}
\begin{aligned}
\Theta_{t}=\left[
   \begin{array}{c}
     X_{t} \\
     \pi_{t} \\
   \end{array}
 \right].
\end{aligned}
\end{equation*}
Then the state equation (\ref{CO state equation}) can be rewritten as
\begin{equation}\label{matrix state equation}
\left\{
\begin{aligned}
d\Theta_{t}=&B(t,\Theta_{t},v_{t})dt+\Sigma(t,\Theta_{t},v_{t})d\nu_{t},\\
\Theta_{0}=&\left[
              \begin{array}{c}
                x \\
                \pi \\
              \end{array}
            \right]\in R\times\{1,2\},
\end{aligned}
\right.
\end{equation}
where we denote
\begin{equation*}
\begin{aligned}
B(t,\Theta,v)=&\left[
   \begin{array}{c}
     b(t,x,1,v)\pi+b(t,x,2,v)(1-\pi) \\
     -\lambda_{1}\pi+\lambda_{2}(1-\pi) \\
   \end{array}
 \right],\\
\Sigma(t,\Theta,v)=&\left[
   \begin{array}{c}
     \sigma(t,x,v) \\
     (h(t,x,1,v)-h(t,x,2,v))\pi(1-\pi) \\
   \end{array}
 \right].
\end{aligned}
\end{equation*}
The cost functional (\ref{CO cost functional}) is rewritten as
\begin{equation}\label{matrix cost functional}
\begin{aligned}
J(v)=E\bigg[\int_{0}^{T}F(t,\Theta_{t},v_{t})dt+G(\Theta_{T})\bigg],
\end{aligned}
\end{equation}
where we denote
\begin{equation*}
\begin{aligned}
F(t,\Theta,v)=&f(t,x,1,v)\pi+f(t,x,2,v)(1-\pi),\\
G(\Theta)=&g(x,1)\pi+g(x,2)(1-\pi).
\end{aligned}
\end{equation*}
The completely observed problem is to minimize (\ref{matrix cost functional})
subject to (\ref{matrix state equation}).
It is a variant of the classical optimal control problem with no Markov chain
and the information flow $\mathcal{F}_{t}^{X}$ is slightly bigger than
$\mathcal{F}_{t}^{\nu}$ generated by the driving Brownian motion $\nu_{t}$.
Note that the optimal filter $\pi_{t}$ is a conditional probability, so $\pi_{t}\in [0,1]$.
As a consequence, under Assumptions (A1)-(A5), the coefficients of the
completely observed problem satisfy the usual conditions that are needed
in a maximum principle. Hence, we can adopt the existing results
(see, for example, Peng \cite{Peng1993}) with minor modification
to establish our maximum principle. We list the details in the following.

Let $u_{t}\in\mathcal{U}_{ad}$ be an optimal control. Let $X_{t}$ be the corresponding
optimal trajectory and $\pi_{t}$ be the corresponding optimal filter.
Let $v_{t}\in\mathcal{U}_{ad}$ be such that $u_{t}+v_{t}\in\mathcal{U}_{ad}$.

Denote
\begin{equation*}
\begin{aligned}
\Gamma_{t}=\left[
   \begin{array}{c}
     \xi_{t} \\
     \eta_{t} \\
   \end{array}
 \right].
\end{aligned}
\end{equation*}
The variational equation is given by
\begin{equation}\label{CO variational equation}
\left\{
\begin{aligned}
d\Gamma_{t}=&[B_{\Theta}(t,\Theta_{t},u_{t})\Gamma_{t}+B_{v}(t,\Theta_{t},u_{t})v_{t}]dt
+[\Sigma_{\Theta}(t,\Theta_{t},u_{t})+\Sigma_{v}(t,\Theta_{t},u_{t})v_{t}]d\nu_{t},\\
\Gamma_{0}=&\left[
   \begin{array}{c}
     0 \\
     0 \\
   \end{array}
 \right],
\end{aligned}
\right.
\end{equation}
where
\begin{equation*}
\begin{aligned}
B_{\Theta}=\left[
    \begin{array}{cc}
      b_{x}(t,x,1,u)\pi+b_{x}(t,x,2,u)(1-\pi) & b(t,x,1,u)-b(t,x,2,u) \\
      0 & -\lambda_{1}-\lambda_{2} \\
    \end{array}
  \right],
\end{aligned}
\end{equation*}
\begin{equation*}
\begin{aligned}
B_{v}=\left[
   \begin{array}{c}
     b_{v}(t,x,1,u)\pi+b_{v}(t,x,2,u)(1-\pi) \\
     0 \\
   \end{array}
 \right],
\end{aligned}
\end{equation*}
\begin{equation*}
\begin{aligned}
\Sigma_{\Theta}=\left[
   \begin{array}{cc}
     \sigma_{x}(t,x,u) & 0 \\
     (h_{x}(t,x,1,u)-h_{x}(t,x,2,u))\pi(1-\pi) & (h(t,x,1,u)-h(t,x,2,u))(1-2\pi) \\
   \end{array}
 \right],
\end{aligned}
\end{equation*}
\begin{equation*}
\begin{aligned}
\Sigma_{v}=\left[
   \begin{array}{c}
     \sigma_{v}(t,x,u) \\
     (h_{v}(t,x,1,u)-h_{v}(t,x,2,u))\pi(1-\pi) \\
   \end{array}
 \right].
\end{aligned}
\end{equation*}
It turns out that the variational equation (\ref{CO variational equation})
admits a unique solution $\Gamma_{t}$ as it is a linear SDE with bounded coefficients.

Then the variational inequality has the following form:
\begin{equation*}
\begin{aligned}
&E\bigg[\int_{0}^{T}[\langle F_{\Theta}(t,\Theta_{t},u_{t}),\Gamma_{t}\rangle
+F_{v}(t,\Theta_{t},u_{t})v(t)]dt
+\langle G_{\Theta}(\Theta_{T}),\Gamma_{T}\rangle\bigg]\geq0,
\end{aligned}
\end{equation*}
where
\begin{equation*}
\begin{aligned}
F_{\Theta}(t,\Theta,u)=\left[
            \begin{array}{c}
            f_{x}(t,x,1,u)\pi+f_{x}(t,x,2,u)(1-\pi) \\
            f(t,x,1,u)-f(t,x,2,u) \\
            \end{array}
           \right],
\end{aligned}
\end{equation*}
\begin{equation*}
\begin{aligned}
F_{v}(t,\Theta,u)=f_{v}(t,x,1,u)\pi+f_{v}(t,x,2,u)(1-\pi),
\end{aligned}
\end{equation*}
\begin{equation*}
\begin{aligned}
G_{\Theta}(\Theta)=\left[
               \begin{array}{c}
                g_{x}(x,1)\pi+g_{x}(x,2)(1-\pi) \\
                g(x,1)-g(x,2) \\
               \end{array}
              \right].
\end{aligned}
\end{equation*}
Denote
\begin{equation*}
\begin{aligned}
\Phi_{t}=\left[
   \begin{array}{c}
     p_{t} \\
     k_{t} \\
   \end{array}
 \right],\quad \Lambda_{t}=\left[
   \begin{array}{c}
     P_{t} \\
     K_{t} \\
   \end{array}
 \right].
\end{aligned}
\end{equation*}
To derive the maximum principle, we introduce the following adjoint equation:
\begin{equation}\label{CO adjoint equation}
\left\{
\begin{aligned}
d\Phi_{t}
=&-[B_{\Theta}^{\top}(t,\Theta,u)\Phi_{t}+\Sigma_{\Theta}^{\top}(t,\Theta,u)\Lambda_{t}
+F_{\Theta}(t,\Theta,u)]dt
+\Lambda_{t}d\nu_{t},\\
\Phi_{T}=&G_{\Theta}(\Theta_{T}),
\end{aligned}
\right.
\end{equation}
which is a standard and well-posed backward stochastic differential equation (BSDE)
and admits a unique solution $(\Phi_{t},\Lambda_{t})$.

Define the Hamiltonian
$H:[0,T]\times (R\times[0,1])\times U\times R^{2}\times R^{2}\mapsto R$ as
\begin{equation*}
\begin{aligned}
H(t,\Theta,v,\Phi,\Lambda)
=\langle\Phi,B(t,\Theta,v)\rangle+\langle\Lambda,\Sigma(t,\Theta,v)\rangle+F(t,\Theta,v).
\end{aligned}
\end{equation*}
In view of Peng \cite[Theorem 4.4]{Peng1993},
we establish the stochastic maximum principle for our hybrid optimal control problem
under partial observation.
\begin{theorem}\label{theorem MP}
Let Assumptions (A1)-(A5) hold. Let $u_{t}\in\mathcal{U}_{ad}$ be an optimal control
and let $\Theta_{t}$ be the corresponding solution to (\ref{matrix state equation}).
Then, we have
\begin{equation*}
\begin{aligned}
H_{v}(t,\Theta_{t},u_{t},\Phi_{t},\Lambda_{t})=0,
\end{aligned}
\end{equation*}
where $(\Phi_{t},\Lambda_{t})$ is the unique solution to the
adjoint equation (\ref{CO adjoint equation}).
\end{theorem}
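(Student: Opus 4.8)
The plan is to follow the convex-variation route to the stochastic maximum principle (as in Peng \cite{Peng1993}), adapted to the enlarged information flow $\mathcal{F}_{t}^{X}\supset\mathcal{F}_{t}^{\nu}$. First I would fix the optimal control $u_{t}$ with state $\Theta_{t}$, choose an admissible direction $v_{t}$ with $u_{t}+v_{t}\in\mathcal{U}_{ad}$, and set $u_{t}^{\varepsilon}=u_{t}+\varepsilon v_{t}$ for $\varepsilon\in(0,1]$ (convexity of $U$ guarantees $u_{t}^{\varepsilon}\in\mathcal{U}_{ad}$). Writing $\Theta_{t}^{\varepsilon}$ for the corresponding state, the preliminary estimate is that $\varepsilon^{-1}(\Theta_{t}^{\varepsilon}-\Theta_{t})\to\Gamma_{t}$ in $L^{2}$, where $\Gamma_{t}$ solves the variational equation (\ref{CO variational equation}). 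This follows from a standard Gronwall plus Burkholder--Davis--Gundy argument using that $B$ and $\Sigma$ are $C^{1}$ in $(\Theta,v)$ with bounded derivatives; here I would exploit that $\pi_{t}\in[0,1]$ so that the products $\pi(1-\pi)$ and their derivatives appearing in $\Sigma$ remain bounded, keeping the coefficients within the scope of Assumptions (A1) and (A4).

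Given this, differentiating $J(u_{t}^{\varepsilon})$ at $\varepsilon=0^{+}$ and using optimality $J(u^{\varepsilon})\ge J(u)$ produces the variational inequality already displayed before the statement. The heart of the argument is then to eliminate the unobservable variation process $\Gamma_{t}$ by duality: I would apply It\^o's formula to the pairing $\langle\Phi_{t},\Gamma_{t}\rangle$, insert the dynamics (\ref{CO variational equation}) for $d\Gamma_{t}$ and (\ref{CO adjoint equation}) for $d\Phi_{t}$, and take expectations. The symmetric terms $\langle B_{\Theta}^{\top}\Phi_{t},\Gamma_{t}\rangle=\langle\Phi_{t},B_{\Theta}\Gamma_{t}\rangle$ and $\langle\Sigma_{\Theta}^{\top}\Lambda_{t},\Gamma_{t}\rangle=\langle\Lambda_{t},\Sigma_{\Theta}\Gamma_{t}\rangle$ cancel, while the $\nu$-integrals vanish in expectation (square integrability of $\Gamma,\Phi,\Lambda$, with $\nu$ an $\mathcal{F}_{t}^{X}$-Brownian motion by Lemma \ref{lemma innovation process}). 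Using $\Gamma_{0}=0$ and the terminal condition $\Phi_{T}=G_{\Theta}(\Theta_{T})$, this yields the duality identity $E\langle G_{\Theta}(\Theta_{T}),\Gamma_{T}\rangle=E\int_{0}^{T}[-\langle F_{\Theta},\Gamma_{t}\rangle+\langle\Phi_{t},B_{v}v_{t}\rangle+\langle\Lambda_{t},\Sigma_{v}v_{t}\rangle]\,dt$.

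Substituting this identity into the variational inequality cancels both the running term $\langle F_{\Theta},\Gamma_{t}\rangle$ and the terminal term $\langle G_{\Theta}(\Theta_{T}),\Gamma_{T}\rangle$, and since $H_{v}=\langle\Phi,B_{v}\rangle+\langle\Lambda,\Sigma_{v}\rangle+F_{v}$, what survives is exactly
\[
E\Big[\int_{0}^{T}\langle H_{v}(t,\Theta_{t},u_{t},\Phi_{t},\Lambda_{t}),v_{t}\rangle\,dt\Big]\ge 0.
\]
Finally I would upgrade this to an equality: the bound holds for every admissible direction $v_{t}$, and since $-v_{t}$ is equally admissible, the integral must vanish for all such $v_{t}$; a routine localization (choosing $v_{t}$ supported on small sets in $(t,\omega)$) then forces $H_{v}(t,\Theta_{t},u_{t},\Phi_{t},\Lambda_{t})=0$ for a.e. $(t,\omega)$.

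I expect the main obstacle to be not the (mechanical) It\^o bookkeeping but the filtration subtlety underlying it. The controls, the variation process $\Gamma_{t}$, and the adjoint pair $(\Phi_{t},\Lambda_{t})$ are all adapted to $\mathcal{F}_{t}^{X}$ rather than to the filtration of a genuine driving Brownian motion, so I must verify that the martingale representation making the adjoint BSDE (\ref{CO adjoint equation}) well posed, and the vanishing of the $\nu$-integrals in the duality step, are legitimate with respect to $\mathcal{F}_{t}^{X}=\mathcal{F}_{t}^{Y}$. This is precisely the ``minor modification'' of Peng's framework alluded to in the text, and it rests on the fact, established in Lemma \ref{lemma innovation process}, that $\nu_{t}$ is an $\mathcal{F}_{t}^{X}$-Brownian motion; once this is in hand the driving noise behaves exactly as in the classical completely observed problem.
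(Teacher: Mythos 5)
Your proof is correct and follows essentially the same route as the paper: the paper sets up the variational equation (\ref{CO variational equation}), the variational inequality, the adjoint BSDE (\ref{CO adjoint equation}), and the Hamiltonian exactly as you do, and then simply invokes Peng's Theorem 4.4 for the convex-variation and duality argument that you write out in full. Your closing discussion of the filtration subtlety (controls and adjoint processes adapted to $\mathcal{F}_{t}^{X}\supset\mathcal{F}_{t}^{\nu}$, with $\nu_{t}$ an $\mathcal{F}_{t}^{X}$-Brownian motion by Lemma \ref{lemma innovation process}) is precisely the ``minor modification'' of Peng's framework that the paper alludes to without spelling out.
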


\section{LQ problem}\label{section LQ}

Theoretically, the maximum principle established in the previous section
characterizes the optimal controls through some necessary conditions.
In this section, we provide an LQ control problem and show how to solve
the problem using our maximum principle.

The LQ hybrid optimal control problem under partial observation is
\begin{equation*}
\begin{aligned}
J(u)=\min_{v\in\mathcal{U}_{ad}}J(v),
\end{aligned}
\end{equation*}
where
\begin{equation*}
\begin{aligned}
J(v)=\frac{1}{2}E\bigg[\int_{0}^{T}
[Q(\alpha_{t})X_{t}^{2}+R(\alpha_{t})v_{t}^{2}]dt
+G(\alpha_{T})X_{T}^{2}\bigg],
\end{aligned}
\end{equation*}
subject to
\begin{equation*}
\left\{
\begin{aligned}
dX_{t}=&[a(\alpha_{t})X_{t}+b(\alpha_{t})v_{t}]dt+\sigma dW_{t},\\
X_{0}=&x\in R,\quad \alpha_{0}=i\in\mathcal{M}.
\end{aligned}
\right.
\end{equation*}
Here, $a(i)$, $b(i)$, $\sigma$, $Q(i)$, $R(i)$, $G(i)$, $i\in\mathcal{M}$, are constants.
Note that the above LQ problem has been studied by
Zhang and Yin \cite{ZhangYin1999TAC} under a completely observed setup.

In the section, we assume:

\textbf{(A6)} $\sigma>0$ and $R(i)>0$, $i\in\mathcal{M}$.

The two-dimensional state process $\Theta_{t}=(X_{t},\pi_{t})^{\top}$
satisfies the following equation:
\begin{equation*}
\left\{
\begin{aligned}
d\Theta_{t}=&B(t,\Theta_{t},v_{t})dt+\Sigma(t,\Theta_{t},v_{t})d\nu_{t},\\
\Theta_{0}=&\left[
              \begin{array}{c}
                x \\
                \pi \\
              \end{array}
            \right]\in R\times \{0,1\},
\end{aligned}
\right.
\end{equation*}
where
\begin{equation*}
\begin{aligned}
B(t,\Theta,v)=&\left[
   \begin{array}{c}
     (a(1)x+b(1)v)\pi+(a(2)x+b(2)v)(1-\pi) \\
     -\lambda_{1}\pi+\lambda_{2}(1-\pi) \\
   \end{array}
 \right]\doteq\left[
                \begin{array}{c}
                  B_{1}(t,\Theta,v) \\
                  B_{2}(t,\Theta,v) \\
                \end{array}
              \right],\\
\Sigma(t,\Theta,v)=&\left[
   \begin{array}{c}
     \sigma \\
     \sigma^{-1}((a(1)-a(2))x+(b(1)-b(2))v)\pi(1-\pi) \\
   \end{array}
 \right]\doteq\left[
                \begin{array}{c}
                  \Sigma_{1}(t,\Theta,v) \\
                  \Sigma_{2}(t,\Theta,v) \\
                \end{array}
              \right].
\end{aligned}
\end{equation*}
The cost functional becomes
\begin{equation*}
\begin{aligned}
J(v)=E\bigg[\int_{0}^{T}F(t,\Theta_{t},v_{t})dt
+G(\Theta_{T})\bigg],
\end{aligned}
\end{equation*}
where
\begin{equation*}
\begin{aligned}
F(t,\Theta,v)=\frac{1}{2}[(Q(1)x^{2}+R(1)v^{2})\pi
+(Q(2)x^{2}+R(2)v^{2})(1-\pi)],
\end{aligned}
\end{equation*}
\begin{equation*}
\begin{aligned}
G(\Theta)=\frac{1}{2}[G(1)x^{2}\pi+G(2)x^{2}(1-\pi)].
\end{aligned}
\end{equation*}
Let $u_{t}\in\mathcal{U}_{ad}$ be an optimal control.
In this case, the adjoint equation reads
\begin{equation}\label{LQ adjoint equation}
\left\{
\begin{aligned}
d\Phi_{t}
=&-[B_{\Theta}^{\top}(t,\Theta_{t},u_{t})\Phi_{t}
+\Sigma_{\Theta}^{\top}(t,\Theta_{t},u_{t})\Lambda_{t}
+F_{\Theta}(t,\Theta_{t},u_{t})]dt
+\Lambda_{t}d\nu_{t},\\
\Phi_{T}=&G_{\Theta}(\Theta_{T}),
\end{aligned}
\right.
\end{equation}
where
\begin{equation*}
\begin{aligned}
B_{\Theta}=\left[
             \begin{array}{cc}
               a(1)\pi+a(2)(1-\pi) & (a(1)-a(2))x+(b(1)-b(2))u \\
               0 & -\lambda_{1}-\lambda_{2} \\
             \end{array}
           \right],
\end{aligned}
\end{equation*}
\begin{equation*}
\begin{aligned}
\Sigma_{\Theta}=\left[
             \begin{array}{cc}
               0 & 0 \\
               \sigma^{-1}(a(1)-a(2))\pi(1-\pi) & \sigma^{-1}((a(1)-a(2))x+(b(1)-b(2))u)(1-2\pi) \\
             \end{array}
           \right],
\end{aligned}
\end{equation*}
\begin{equation*}
\begin{aligned}
F_{\Theta}=\left[
             \begin{array}{c}
               Q(1)x\pi+Q(2)x(1-\pi) \\
               \frac{1}{2}[(Q(1)-Q(2))x^{2}+(R(1)-R(2))u^{2}] \\
             \end{array}
           \right],
\end{aligned}
\end{equation*}
\begin{equation*}
\begin{aligned}
G_{\Theta}=\left[
             \begin{array}{c}
               G(1)x\pi+G(2)x(1-\pi) \\
               \frac{1}{2}(G(1)-G(2))x^{2} \\
             \end{array}
           \right].
\end{aligned}
\end{equation*}
The corresponding Hamiltonian is given by
\begin{equation*}
\begin{aligned}
&H(t,\Theta,v,\Phi,\Lambda)\\
=&\bigg\langle\left[
              \begin{array}{c}
                p \\
                k \\
              \end{array}
            \right],
            \left[
              \begin{array}{c}
                B_{1}(t,\Theta,v) \\
                B_{2}(t,\Theta,v) \\
              \end{array}
            \right]\bigg\rangle
+\bigg\langle\left[
              \begin{array}{c}
                P \\
                K \\
              \end{array}
            \right],
            \left[
              \begin{array}{c}
                \Sigma_{1}(t,\Theta,v) \\
                \Sigma_{2}(t,\Theta,v) \\
              \end{array}
            \right]\bigg\rangle+F(t,\Theta,v).
\end{aligned}
\end{equation*}
From the maximum principle (Theorem \ref{theorem MP}),
it is necessary for the optimal control $u_{t}$ to satisfy
\begin{equation*}
\begin{aligned}
H_{v}(t,\Theta_{t},u_{t},\Phi_{t},\Lambda_{t})=0,
\end{aligned}
\end{equation*}
i.e.,
\begin{equation*}
\begin{aligned}
&p_{t}(b(1)\pi_{t}+b(2)(1-\pi_{t}))
+K_{t}\sigma^{-1}(b(1)-b(2))\pi_{t}(1-\pi_{t})\\
&+(R(1)\pi_{t}+R(2)(1-\pi_{t}))u_{t}=0.
\end{aligned}
\end{equation*}
Then we have
\begin{equation}\label{LQ optimal control}
\begin{aligned}
u_{t}=&-(R(1)\pi_{t}+R(2)(1-\pi_{t}))^{-1}\\
&\times[p_{t}(b(1)\pi_{t}+b(2)(1-\pi_{t}))
+K_{t}\sigma^{-1}(b(1)-b(2))\pi_{t}(1-\pi_{t})],
\end{aligned}
\end{equation}
which is observable and takes a non-linear form in terms of
the optimal filter and adjoint process.
\begin{proposition}
Let Assumption (A6) hold. Then, an observable optimal control for the LQ problem
under partial observation is given by (\ref{LQ optimal control}), where $p_{t}$
and $K_{t}$ are part of the unique solution to the adjoint equation (\ref{LQ adjoint equation}).
\end{proposition}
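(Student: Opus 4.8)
The plan is to establish optimality by a \emph{convexity-plus-existence} argument rather than by a direct verification in the transformed coordinates. Concretely, I would (i) show that a unique minimizer of $J$ over $\mathcal{U}_{ad}$ exists, (ii) invoke the maximum principle (Theorem \ref{theorem MP}) to conclude that this minimizer must satisfy $H_v=0$, which is exactly the feedback relation (\ref{LQ optimal control}) already computed in the text, and (iii) verify that the resulting $u_t$ is $\mathcal{F}_t^X$-adapted, i.e.\ genuinely observable. Steps (i)--(iii) together prove that an optimal control is given by (\ref{LQ optimal control}).

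For existence, the key observation is that although the completely observed state equation (\ref{matrix state equation}) is nonlinear in $\Theta=(X,\pi)^\top$, the underlying pre-filtering dynamics $dX_t=[a(\alpha_t)X_t+b(\alpha_t)v_t]dt+\sigma\,dW_t$ are \emph{affine} in $(X,v)$, so the control-to-state map $v\mapsto X^v$ is affine on the Hilbert space $\mathcal{U}_{ad}=L^2_{\mathcal{F}^X}(0,T;U)$. Consequently, under the natural LQ positivity $Q(i)\ge0$, $G(i)\ge0$ together with (A6), the functional $J(v)=\tfrac12 E[\int_0^T(Q(\alpha)(X^v)^2+R(\alpha)v^2)\,dt+G(\alpha)(X^v_T)^2]$ is a nonnegative quadratic form composed with an affine map, hence convex; moreover the term $\tfrac12E\int_0^T R(\alpha)v^2\,dt\ge\tfrac{c}{2}\|v\|^2$ with $c=\min_i R(i)>0$ makes $J$ strictly convex and coercive. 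Since $\mathcal{U}_{ad}$ is a closed convex subset of a Hilbert space (taking $U$ closed) and $J$ is convex and continuous, hence weakly lower semicontinuous, a unique minimizer $u$ exists.

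Having secured existence, I would apply Theorem \ref{theorem MP} to this minimizer: the necessary condition $H_v(t,\Theta_t,u_t,\Phi_t,\Lambda_t)=0$ is precisely the scalar equation displayed just before (\ref{LQ optimal control}), and since $R(1)\pi_t+R(2)(1-\pi_t)\ge c>0$ is invertible (because $\pi_t\in[0,1]$ and $R(i)>0$), solving for $u_t$ yields exactly (\ref{LQ optimal control}), with $(p_t,K_t)$ read off from the unique solution $(\Phi_t,\Lambda_t)$ of the well-posed adjoint BSDE (\ref{LQ adjoint equation}). For observability I would note that $X_t$ and $\pi_t$ are $\mathcal{F}_t^X$-adapted by construction of the filter (\ref{indicator function}), and that $(\Phi_t,\Lambda_t)$, being the solution of a BSDE driven by the innovation $\nu_t$ with $\mathcal{F}^X$-adapted coefficients and $\mathcal{F}_T^X$-measurable terminal data, is $\mathcal{F}_t^X$-adapted; hence $u_t$ in (\ref{LQ optimal control}) is $\mathcal{F}_t^X$-adapted and lies in $\mathcal{U}_{ad}$.

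The main obstacle is conceptual rather than computational: one cannot prove optimality by the usual sufficiency theorem applied in the $(X,\pi)$-coordinates, because the transformed Hamiltonian is \emph{not} jointly convex in $(\Theta,v)$ --- the term $K\,\sigma^{-1}\big((a(1)-a(2))x+(b(1)-b(2))v\big)\pi(1-\pi)$ contains the cross-products $x\,\pi(1-\pi)$ and $v\,\pi(1-\pi)$, which are indefinite. The resolution is to route optimality through the convexity of $J$ in the \emph{original} control variable $v$ (via the affine control-to-state map before filtering), so that existence of a minimizer plus the necessary condition of Theorem \ref{theorem MP} already suffice; this also neatly avoids having to solve the coupled forward-backward system obtained by substituting (\ref{LQ optimal control}) into (\ref{matrix state equation}) and (\ref{LQ adjoint equation}). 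A secondary point worth recording is that the positivity $Q(i),G(i)\ge0$ is what underlies both convexity and coercivity; if one only assumes (A6) with indefinite $Q,G$, the argument would instead require a Riccati-type decoupling to certify that the critical point furnished by Theorem \ref{theorem MP} is a genuine global minimizer.
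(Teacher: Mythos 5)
Your step (ii) is exactly the paper's proof: the Proposition is established there by nothing more than the derivation that precedes it --- write the Hamiltonian for the LQ data, apply Theorem \ref{theorem MP} to get $H_v(t,\Theta_t,u_t,\Phi_t,\Lambda_t)=0$, and solve the resulting scalar equation for $u_t$, which is possible because $R(1)\pi_t+R(2)(1-\pi_t)\ge\min_i R(i)>0$ by (A6) and $\pi_t\in[0,1]$. The paper, like its Theorem \ref{theorem MP}, is making a necessary-condition statement (``the optimal control, if it exists, must be given by (\ref{LQ optimal control})''); it never proves existence of a minimizer, and it asserts observability in the same brief way you do in step (iii). So the content of the Proposition is covered by your steps (ii)--(iii), and your observation that the sufficiency route is blocked --- the transformed Hamiltonian is not jointly convex in $(\Theta,v)$ because of the cross terms $x\,\pi(1-\pi)$ and $v\,\pi(1-\pi)$ in $\langle\Lambda,\Sigma\rangle$ --- is correct and worth recording; it explains why the paper stops at the necessary condition.

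Your step (i), however, does not work as written, for two concrete reasons. First, it requires $Q(i)\ge0$ and $G(i)\ge0$, which are not part of (A6) ($\sigma>0$, $R(i)>0$); you flag this yourself, but it means your argument proves a different proposition from the one stated. Second, and more fundamentally, $\mathcal{U}_{ad}=L^2_{\mathcal{F}^X}(0,T;U)$ is \emph{not} a fixed closed convex subset of a Hilbert space: the filtration $\mathcal{F}_t^X=\mathcal{F}_t^{X^v}$ is generated by the controlled state and therefore depends on $v$ (in the LQ example $Y_t=\sigma^{-1}(X_t-x)$, so $\mathcal{F}^Y=\mathcal{F}^X$ inherits the same dependence through the drift $h$). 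Given two admissible controls $v^1,v^2$, each adapted to the filtration of its own state process, the convex combination $\lambda v^1+(1-\lambda)v^2$ need not be adapted to the filtration generated by the state it produces, so convexity of the admissible set --- and likewise weak closedness of the adaptedness constraint --- fails or at least requires proof. The affinity of $v\mapsto X^v$ from (\ref{PO system}) is fine for a fixed filtration, but the admissibility constraint is circular. To rescue step (i) one would have to pass to a weak formulation under the reference measure $\widehat{P}$, where the observation is a fixed Brownian motion and controls range over a fixed filtration; but then $J$ is expressed through the Girsanov density $M_T$ and convexity in $v$ is no longer the elementary quadratic-form statement you invoke. Since the Proposition as stated only needs the first-order characterization, the safe course is to drop step (i) (or state it as a separate result under the extra positivity and a weak-formulation setup) and let steps (ii)--(iii) stand, which is precisely what the paper does.
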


\section{Concluding remarks}\label{section concluding remarks}

In this paper, a novel non-linear filtering method for hybrid diffusions is developed,
which gives a filtering equation (\ref{general filtering equation})
satisfied by the general filtering process.
A new maximum principle is established and turns out to be a direct and convenient
way to study the hybrid optimal control problem under partial observation,
in which the Markov chain cannot be observed. As an application, the maximum principle
is applied to solve an LQ problem to get some observable optimal controls.

This paper, we believe, has posed more questions than answers.
The problem formulation considered in this paper is a simple but illustrative one,
extensions to more general problems may open up a new avenue
for stochastic filtering and stochastic control theory.
On the other hand, the maximum principle derived should have
a wide range of applications in various fields,
such as finance, management, engineering, and so on.
These topics in practice will be considered in our future study.


\begin{thebibliography}{00}

\bibitem{Bjork1980}
T. Bj\"{o}rk,
Finite dimensional optimal filters for a class of lt\^{o}-processes
with jumping parameters,
Stochastics,
4 (1980), 167-183.

\bibitem{CKX2009}
D. Crisan, M. Kouritzin, J. Xiong,
Nonlinear filtering with signal dependent observation noise,
Electron. J. Probab.,
14 (2009), 1863-1883.

\bibitem{DaiZhangZhu2010}
M. Dai, Q. Zhang, Q. J. Zhu,
Trend following trading under a regime switching model,
SIAM J. Financial Math.,
1 (2010), 780-810.

\bibitem{Donnelly2011}
C. Donnelly,
Sufficient stochastic maximum principle in a regime-switching diffusion model,
Appl. Math. Optim.,
64 (2011), 155-169.

\bibitem{HWX2009}
J. Huang, G. Wang, J. Xiong,
A maximum principle for partial information backward stochastic control problems with applications,
SIAM J. Control Optim.,
48 (2009), 2106-2117.

\bibitem{KaratzasShreve1991}
I. Karatzas, S. E. Shreve,
Brownian Motion and Stochastic Calculus,
Springer-Verlag, New York, 1991.

\bibitem{LiZheng2015}
Y. Li, H. Zheng,
Weak necessary and sufficient stochastic maximum principle
for Markovian regime-switching diffusion models,
Appl. Math. Optim.,
71 (2015), 39-77.

\bibitem{LipsterShiryayev1977}
R. S. Liptser, A. N. Shiryayev,
Statistics of Random Processes. I. General theory,
Springer-Verlag, New York-Heidelberg, 1977.

\bibitem{NNY2020ESAIM}
S. L. Nguyen, D. T. Nguyen, G. Yin,
A stochastic maximum principle for switching diffusions using conditional mean-fields
with applications to control problems,
ESAIM Control Optim. Calc. Var.,
26 (2020), Paper No. 69.

\bibitem{NYN2021AMO}
S. L. Nguyen, G. Yin, D. T. Nguyen,
A general stochastic maximum principle for mean-field controls with regime switching,
Appl. Math. Optim.,
84 (2021), 3255-3294.


\bibitem{Peng1993}
S. Peng,
Backward stochastic differential equations and applications to optimal control,
Appl. Math. Optim.,
27 (1993), 125-144.

\bibitem{RishelHelmes2006}
R. Rishel, K. Helmes,
A variational inequality sufficient condition for optimal stopping
with application to an optimal stock selling problem,
SIAM J. Control Optim.,
45 (2006), 580-598.

\bibitem{WangWu2009TAC}
G. Wang, Z. Wu,
The maximum principles for stochastic recursive optimal control problems
under partial information,
IEEE Trans. Automat. Control,
54 (2009), 1230-1242.

\bibitem{WWX2013SICON}
G. Wang, Z. Wu, J. Xiong,
Maximum principles for forward-backward stochastic control systems
with correlated state and observation noises,
SIAM J. Control Optim.,
51 (2013), 491-524.

\bibitem{WangYu2012}
G. Wang, Z. Yu,
A partial information non-zero sum differential game of backward stochastic
differential equations with applications,
Automatica,
48 (2012), 342-352.

\bibitem{Wonham1965}
W. M. Wonham,
Some applications of stochastic differential equations to optimal nonlinear filtering,
J. SIAM Control,
2 (1965), 347-369.

\bibitem{Xiong2008}
J. Xiong,
An Introduction to Stochastic Filtering Theory,
Oxford University Press, Oxford, 2008.

\bibitem{XuYi2020}
Z. Q. Xu, F. Yi,
Optimal redeeming strategy of stock loans under drift uncertainty,
Math. Oper. Res.,
45 (2020), 384-401.

\bibitem{YaoZhangZhou2006}
D. D. Yao, Q. Zhang, X. Y. Zhou,
A regime-switching model for European options,
In: Stochastic Processes, Optimization, and Control Theory:
Applications in Financial Engineering, Queueing Networks, and Manufacturing Systems,
281-300, Springer, New York, 2006.

\bibitem{YinZhang2013}
G. Yin, Q. Zhang,
Continuous-time Markov chains and applications: A two-time-scale approach,
Springer, New York, 2013.

\bibitem{YinZhu2010}
G. Yin, C. Zhu,
Hybrid Switching Diffusions: Properties and Applications,
Springer, New York, 2010.

\bibitem{ZhangYin1999TAC}
Q. Zhang, G. Yin,
On nearly optimal controls of hybrid LQG problems,
IEEE Trans. Automat. Control,
44 (1999), 2271-2282.

\bibitem{ZES2012}
X. Zhang, R. J. Elliott, T. K. Siu,
A stochastic maximum principle for a Markov regime-switching jump-diffusion model
and its application to finance,
SIAM J. Control Optim.,
50 (2012), 964-990.

\end{thebibliography}
\end{document}